\newtheorem{theorem}{Theorem}
\title{Explicit Constructions of Maximal 3-Zero-Sum-Free Subsets in \((\mathbb{Z}/4\mathbb{Z})^n\)}
\author{Alfonso Davila Vera \\ DynMEP \\ adavila@dynmep.com}
\date{September 02, 2025}
\begin{document}
\maketitle
\begin{abstract}
We address a problem posed by Nathan Kaplan in the 2014 Combinatorial and Additive Number Theory (CANT) session: finding the largest subset \(H \subseteq (\mathbb{Z}/4\mathbb{Z})^n\) with no distinct \(x, y, z \in H\) such that \(x + y + z \equiv 0 \pmod{4}\) (pointwise). For abelian groups of even order, a standard lower bound of \(|G|/2\) is known via the Sylow 2-subgroup, achieved by subsets where elements are odd in a \(\mathbb{Z}/2^k\mathbb{Z}\) factor. We prove this bound is tight for \(G = (\mathbb{Z}/4\mathbb{Z})^n\), using a pair-counting argument (due to Kevin Costello). An explicit construction is the set of all vectors with first coordinate odd (1 or 3 mod 4), or equivalently, odd weight (sum odd mod 2), yielding size \(2 \times 4^{n-1} = 4^n / 2\) and density exactly 0.5 for all \(n\). We verify this computationally for \(n \leq 10\). To explore AI's role in rediscovering such bounds, we apply an AI-assisted hybrid greedy-genetic algorithm, which independently achieves the optimal size. Code and full sets are available at \url{https://github.com/DynMEP/ZeroSumFreeSets-Z4/releases/tag/v5.0.0}. We discuss generalizations and analogies to cap sets in \((\mathbb{Z}/3\mathbb{Z})^n\) (OEIS A090245).
\end{abstract}
\section{Introduction}
Zero-sum problems in finite abelian groups seek subsets avoiding specific summation conditions. The Erd\H{o}s--Ginzburg--Ziv theorem states that any sequence of \(2|G| - 1\) elements in an abelian group \(G\) contains a subsequence of length \(|G|\) summing to zero \cite{Caro1997}. Variants, such as avoiding \(k\) distinct elements summing to zero, are surveyed in \cite{Gao2006}. Nathan Kaplan's 2014 CANT problem \cite{Miller2017} asks for the largest \(H \subseteq G\) with no distinct \(x, y, z \in H\) such that \(x + y + z = 0\), motivated by cubic curves over finite fields. For groups of even order, \(G \cong \mathbb{Z}/2^k\mathbb{Z} \times G'\) by Sylow decomposition, and the set of elements odd in the \(\mathbb{Z}/2^k\mathbb{Z}\) factor (sum of three odds is odd) gives a 3-zero-sum-free subset of size \(|G|/2\).

For \(G = (\mathbb{Z}/4\mathbb{Z})^n\), the ring structure (exponent 4, non-prime order) poses unique challenges. Trivial constructions like \(\{1,3\}^n\) (size \(2^n\)) are suboptimal. Here, the general \(|G|/2\) lower bound applies, but its optimality was open. We prove it is maximal, using a pair-counting argument from Costello \cite{MathOverflow499530}. We provide explicit constructions and computational verifications up to \(n=10\). To test AI's capability in combinatorial discovery, we employ a hybrid greedy-genetic algorithm that rediscovers this optimal bound, demonstrating machine learning's potential akin to FunSearch for cap sets \cite{RomeraParedes2023}.
\section{Construction and Validity}
Define \(H = \{ v \in (\mathbb{Z}/4\mathbb{Z})^n \mid v_1 \equiv 1 \text{ or } 3 \pmod{4} \}\), where \(v_1\) is the first coordinate (suggested by Elsholtz \cite{MathOverflow499530}). This set has \(2 \times 4^{n-1} = 4^n / 2\) vectors, yielding density 0.5.
\begin{theorem}
\(H\) is 3-zero-sum-free.
\end{theorem}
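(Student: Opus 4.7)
The plan is to reduce the entire question to the first coordinate, since the defining condition on $H$ only constrains $v_1$. For any $x, y, z \in H$, the coordinates $x_1, y_1, z_1$ all lie in $\{1, 3\} \subset \mathbb{Z}/4\mathbb{Z}$, i.e.\ each is congruent to an odd integer modulo $4$. If $x + y + z \equiv 0 \pmod{4}$ held pointwise, then in particular $x_1 + y_1 + z_1 \equiv 0 \pmod{4}$, forcing $x_1 + y_1 + z_1$ to be even.

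The key step is a parity argument: lift $x_1, y_1, z_1$ to odd integer representatives and observe that a sum of three odd integers is odd, hence never congruent to $0 \pmod{2}$, let alone $0 \pmod{4}$. Concretely, enumerating the possibilities in $\mathbb{Z}/4\mathbb{Z}$, the sum of any three elements of $\{1,3\}$ gives a value in $\{3,\,1+1+3=5\equiv1,\,1+3+3=7\equiv3,\,3+3+3=9\equiv1\} = \{1,3\} \pmod{4}$, which never contains $0$. This contradiction shows no triple from $H$ can sum to zero in the first coordinate, and therefore certainly not pointwise.

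Since the contradiction already appears in the first coordinate, the remaining $n-1$ coordinates play no role, and the distinctness hypothesis on $x, y, z$ is not even needed; $H$ is in fact free of all (ordered) zero-sum triples, distinct or not. There is no substantive obstacle in the argument: the only thing to be careful about is to work with integer representatives when invoking the ``odd + odd + odd is odd'' fact, so as not to confuse parity in $\mathbb{Z}$ with arithmetic in $\mathbb{Z}/4\mathbb{Z}$. The cardinality claim $|H| = 2 \cdot 4^{n-1} = 4^n/2$ is immediate from counting: two choices for $v_1$ and $4^{n-1}$ free choices for $(v_2, \ldots, v_n)$.
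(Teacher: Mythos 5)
Your proof is correct and follows essentially the same route as the paper: restrict to the first coordinate, note that each of $x_1, y_1, z_1$ is $1$ or $3$ bmod $4$, and check the four cases to see the sum is always $1$ or $3$, never $0$. The extra observations (distinctness is not needed, and the parity framing via odd integer representatives) are accurate but do not change the argument.
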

\begin{proof}
Consider three distinct \(a, b, c \in H\). Their first coordinates \(a_1, b_1, c_1\) are each 1 or 3 mod 4. The sum \(a_1 + b_1 + c_1 \pmod{4}\) is:
\begin{itemize}
\item \(1 + 1 + 1 = 3\),
\item \(1 + 1 + 3 = 5 \equiv 1\),
\item \(1 + 3 + 3 = 7 \equiv 3\),
\item \(3 + 3 + 3 = 9 \equiv 1\).
\end{itemize}
All cases are odd, never 0 mod 4. Thus, \(a + b + c \not\equiv 0 \pmod{4}\).
\end{proof}
An equivalent construction is all vectors with odd weight (sum of coordinates odd mod 2), as three odd weights sum odd \(\neq 0\) mod 2, extensible to mod 4.
\section{Maximality}
\begin{theorem}
The maximum size of a 3-zero-sum-free subset \(H \subseteq (\mathbb{Z}/4\mathbb{Z})^n\) is \(4^n / 2\).
\end{theorem}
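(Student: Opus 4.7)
The plan is to execute the pair-counting argument attributed to Kevin Costello. Let $G := (\mathbb{Z}/4\mathbb{Z})^n$ and suppose $H \subseteq G$ is 3-zero-sum-free; I want to deduce $|H| \le |G|/2$. The matching lower bound is already secured by the construction above, so only the upper bound is in question. For each ordered pair $(x,y)$ of distinct elements of $H$, consider $z := -(x+y) \in G$. The 3-zero-sum-free hypothesis forbids $\{x,y,z\}$ from being a triple of three distinct elements of $H$, hence either $z \notin H$ or $z \in \{x,y\}$. Since $-2 \equiv 2 \pmod 4$ in this group, the case $z=x$ rearranges to $y = 2x$, and $z=y$ rearranges to $x = 2y$. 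Consequently, every ordered pair $(x,y) \in H^2$ with $x \ne y$ and $-(x+y) \in H$ is of the form $(x, 2x)$ with $x \in H$, $2x \in H$, and $x \ne 0$. Writing $B := \{x \in H \setminus \{0\} : 2x \in H\}$, the count of such exceptional pairs is exactly $2|B|$.

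The second step is a global tally. Classifying all ordered distinct pairs $(x,y) \in H^2$ by whether their negative sum lies in $H$ or not, and using the trivial bound $r(g) \le |H|$ on the number of representations $g = x+y$ with $x \in H$ (for each $x$, $y = g - x$ is determined), I obtain
\[
|H|(|H|-1) \;\le\; 2|B| \;+\; (|G|-|H|)\cdot |H|,
\]
which rearranges to $2|H|^2 - (|G|+1)|H| \le 2|B|$. Combined with the crude estimate $|B| \le |H|$, this yields $|H| \le (|G|+3)/2$, within an additive constant of the desired bound.

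The main obstacle, and the delicate core of the argument, is closing the residual $O(1)$ gap between $(|G|+3)/2$ and $|G|/2$. The naive bound $|B| \le |H|$ is too lossy; the sharpening must exploit that $x \in B$ forces $2x \in H \cap 2G$, so $B$ factors through the doubling homomorphism $G \to 2G$ (whose kernel is $2G \cong (\mathbb{Z}/2\mathbb{Z})^n$), and furthermore $H \cap 2G$ is itself 3-zero-sum-free inside the subgroup $2G$. I expect the tightening to come from one of three complementary routes: (a) a recursive application of the pair-counting argument to $H \cap 2G$ inside $2G$, reducing to a strictly smaller instance of the same problem; (b) a sharper tally of the ``good'' pairs, replacing $r(g)\le|H|$ for $g \in G\setminus(-H)$ by a bound that also uses $g \notin -H$ to rule out some of the $|H|$ candidate $x$'s; or (c) verification of the low-dimensional base cases $n = 1, 2$ by direct computation (consistent with the computational check for $n \le 10$), then a clean induction. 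Pinning down which of these closes the inequality to the exact $|H| \le |G|/2$ is where the argument earns its subtlety.
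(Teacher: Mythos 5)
Your handling of the degenerate pairs is correct and is in fact \emph{more} careful than the paper's own proof, which asserts outright that every ordered pair \((x,y)\) of distinct elements of \(H\) forces \(-(x+y)\notin H\), silently ignoring exactly the cases \(-(x+y)\in\{x,y\}\) that you isolate; your count of \(2|B|\) exceptional pairs with \(B=\{x\in H\setminus\{0\}:2x\in H\}\) is exactly right. The problem is that your proposal then stops at \(|H|\le(|G|+3)/2\), i.e.\ \(|H|\le|G|/2+1\) after rounding, and none of the three routes (a)--(c) you sketch for removing the last \(+1\) is actually carried out. That is a genuine gap, and it is not closable by better bookkeeping alone: for \(n=1\) the set \(\{0,1,2\}\subseteq\mathbb{Z}/4\mathbb{Z}\) has only one triple of pairwise distinct elements, with sum \(3\not\equiv 0\pmod 4\), so it is 3-zero-sum-free of size \(3>4^1/2\). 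There \(B=\{1,2\}\) and your inequality reads \(6\le 4+3\); the exceptional pairs genuinely carry \(|H|\) above \(|G|/2\). In particular your route (c) fails at its very first base case, and the theorem as stated --- and the paper's proof of it, which is your argument with the \(2|B|\) term illegitimately dropped --- is false for \(n=1\).

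If the intended claim is restricted to \(n\ge 2\) (which is what the reported ILP verification would support), then what is missing is a genuine argument along the lines of your route (a): assuming \(|H|=4^n/2+1\), your inequality forces \(2|B|\ge|H|\), hence \(|B|>4^{n-1}\); every \(x\in B\) doubles into \(H\cap 2G\) along fibers of size \(|2G|=2^n\), so \(|H\cap 2G|>2^{n-2}\); and one must then extract a contradiction from the fact that \(H\cap 2G\) is itself 3-zero-sum-free inside \(2G\cong(\mathbb{Z}/2\mathbb{Z})^n\) combined with further information about how \(H\) meets the cosets of \(2G\). Since 3-zero-sum-free subsets of \((\mathbb{Z}/2\mathbb{Z})^n\) can be as large as \(2^{n-1}\) (an affine hyperplane off the origin), the subgroup reduction by itself does not finish the job; identifying the additional constraint is the actual mathematical content of the upper bound, and neither your proposal nor the paper supplies it.
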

\begin{proof}
For any 3-zero-sum-free \(H\), consider ordered pairs \((x, y) \in H \times H, x \neq y\): \(|H|^2 - |H|\) pairs. Each requires \(-(x + y) \notin H\) (else \(x + y + (-(x + y)) = 0\)). Non-\(H\) elements number \(4^n - |H|\). Each \(z \notin H\) is hit by at most \(|H|\) pairs (fix \(x\), \(y = -x - z\)). Thus, \(|H|^2 - |H| \leq (4^n - |H|) \cdot |H|\), or \(|H| (2|H| - 4^n) \leq 0\). Since \(|H| \geq 0\), \(2|H| - 4^n \leq 0\), so \(|H| \leq 4^n / 2\). The construction \(H\) achieves this, proving maximality (argument due to Costello \cite{MathOverflow499530}).
\end{proof}
Integer linear programming confirms optimality for \(n \leq 5\).
\section{Computational Results}
We verified:
\begin{itemize}
\item \(n=5\): Size 512 (50\%).
\item \(n=6\): Size 2048 (50\%), in \texttt{n6\_best\_set.json}.
\item \(n=7\): Size 8192 (50\%), in \texttt{n7\_best\_set.json}.
\item \(n=8\): Size 32768 (50\%), in \texttt{n8\_best\_set.json}.
\item \(n=9\): Size 131072 (50\%), in \texttt{n9\_best\_set.json}.
\item \(n=10\): Size 524288 (50\%), in \texttt{n10\_best\_set.json}.
\end{itemize}
Outputs from \url{https://github.com/DynMEP/ZeroSumFreeSets-Z4/releases/tag/v5.0.0}.
\section{Method}
To assess AI's ability to rediscover theoretical bounds, we implemented a hybrid greedy-genetic algorithm in \texttt{omni\_optimized\_hybrid\_discovery\_v5.py}. Initial greedy heuristics (baseline: 176 for \(n=5\); refined: 512 for \(n=5\)) used priority functions favoring 2-heavy vectors. Refinements with stratified sampling, mutations, and GPU batching achieved the optimal 0.5 density, independently confirming the construction.
\section{Discussion}
This confirms the \(|G|/2\) bound is optimal for \((\mathbb{Z}/4\mathbb{Z})^n\), resolving asymptotic density at 0.5. The AI rediscovery highlights its utility in heuristics, surpassing initial \(\sim 15\%\) densities. Open questions:
\begin{itemize}
\item Maximal \(k\)-sum-free subsets for \(k > 3\)?
\item Generalizations to \(\mathbb{Z}/m\mathbb{Z}^n\) for \(m > 4\)?
\item Analogies to cap sets in \((\mathbb{Z}/3\mathbb{Z})^n\) (OEIS A090245 \cite{OEIS-A090245}), with subexponential growth?
\end{itemize}

This situation contrasts with the classical cap set problem in \((\mathbb{Z}/3\mathbb{Z})^n\), where the exact maximum size remains highly nontrivial and only exponential upper bounds are known \cite{OEIS-A090245}. In that context, AI-driven search methods such as ours may prove more impactful, since the theoretical optimum is still unknown.

MathOverflow post \href{https://mathoverflow.net/questions/499530}{[499530]} invites input. Code/data open-source (MIT). Thanks to Nathan Kaplan for feedback, Kevin Costello and Christian Elsholtz for MO insights, and Alfred Geroldinger for survey approval.


\begin{thebibliography}{9}
\bibitem{Caro1997}
Y. Caro, A weighted Erdős–Ginzburg–Ziv theorem, \emph{J. Combin. Theory Ser. A} \textbf{80}(2):186–195, 1997.
\bibitem{Gao2006}
W.D. Gao and A. Geroldinger, Zero-sum problems in finite abelian groups: A survey, \emph{Expo. Math.} \textbf{24}(4):337–369, 2006.
\bibitem{Miller2017}
S.J. Miller et al., Combinatorial and additive number theory problem sessions: 2009–2016, \url{https://web.williams.edu/Mathematics/sjmiller/public_html/math/papers/CANTProblemSessions.pdf}, 2017.
\bibitem{MathOverflow499530}
MathOverflow question 499530: Largest 3-zero-sum-free subset in \(\mathbb{Z}/4\mathbb{Z}^n\), 2025.
\bibitem{OEIS-A090245}
OEIS Foundation Inc., A090245: Sizes of largest cap sets in \((\mathbb{Z}/3\mathbb{Z})^n\), \url{https://oeis.org/A090245}, accessed September 02, 2025.
\bibitem{RomeraParedes2023}
B. Romera-Paredes et al., Mathematical discoveries from program search with large language models, \emph{Nature} \textbf{625}:468–475, 2024. \href{https://doi.org/10.1038/s41586-023-06924-6}{DOI: 10.1038/s41586-023-06924-6}
\end{thebibliography}
\end{document}